\journal{}
\newcommand{\eps}{\varepsilon}
\newcommand{\set}[1]{\left\{#1\right\}}
\newcommand{\p}{\partial}
\newcommand{\mE}{\mathbf{E}}
\newcommand{\mH}{\mathbf{H}}
\newcommand{\mr}{\mathbf{r}}
\newcommand{\ms}[1]{\mr_{\mathrm{\tiny RX}}^{(#1)}}
\newcommand{\vt}{\boldsymbol{\theta}}
\theoremstyle{plain}
\newtheorem{thm}{Theorem}[section]
\theoremstyle{remark}
\newtheorem{rem}{Remark}[section]
\newtheorem{ex}{Example}[section]
\begin{document}

\begin{frontmatter}



\title{Direct sampling method for anomaly imaging from $S-$parameter}


\author{Won-Kwang Park}
\ead{parkwk@kookmin.ac.kr}
\address{Department of Information Security, Cryptology, and Mathematics, Kookmin University, Seoul, 02707, Korea}

\begin{abstract}
In this paper, we develop a fast imaging technique for small anomalies located in homogeneous media from $S-$parameter data measured at dipole antennas. Based on the representation of $S-$parameters when an anomaly exists, we design a direct sampling method (DSM) for imaging an anomaly and establishing a relationship between the indicator function of DSM and an infinite series of Bessel functions of integer order. Simulation results using synthetic data at $f = 1 $GHz of angular frequency are illustrated to support the identified structure of the indicator function.
\end{abstract}

\begin{keyword}
Direct sampling method \sep $S-$parameter \sep Bessel functions \sep simulation results



\end{keyword}

\end{frontmatter}





\section{Introduction}
In this study, we consider an inverse scattering problem that determines the locations of small anomalies in a homogeneous background using $S-$parameter measurements. This study has been motivated by microwave tomography for small-target imaging, such as in the case of tumors during the early stages of breast cancer. Because of the intrinsic ill-posedness and nonlinearity of inverse scattering problems, this problem is very hard to solve; however, it is still an interesting research topic because of its relevance in human life. Many researchers have focused on various imaging techniques that are mostly based on Newton-type iteration-based techniques \cite[Table II]{CZBN}. However, the success of Newton-type based techniques is highly dependent on the initial guess, which must be close to the unknown targets. Furthermore, Newton-type based techniques have various limitations such as large computational costs, local minimizer problem, difficulty in imaging multiple anomalies, and selecting appropriate regularization. Because of this reason, developing a fast imaging technique for obtaining a good initial guess is highly required. Recently, various non-iterative techniques have been investigated, e.g., MUltiple SIgnal Classification (MUSIC) algorithm, linear sampling method, topological derivative strategy, and Kirchhoff/subspace migrations. A brief description of such techniques can be found in \cite{AGKPS,AK2,K2,P-SUB3,SZ}.

Direct sampling method (DSM) is another non-iterative technique for imaging unknown targets. Unlike the non-iterative techniques mentioned above, DSM requires either one or a small number of fields with incident directions \cite{IJZ1,IJZ2,LZ}. Furthermore, this is a considerably effective and stable algorithm. In a recent study \cite{PKLS}, the MUSIC algorithm was designed for imaging small and extended anomalies; however, DSM has not yet been designed and used to identify unknown anomalies from measured $S-$parameter data.

To address this issue, we design a DSM from $S-$parameter data collected by a small number of dipole antennas to identify the outline shape anomaly with different conductivity and relative permittivity compared to the background medium and a significantly smaller diameter than the wavelength. To investigate the feasibility of the designed DSM, we establish a relationship between the indicator function of DSM and an infinite series of Bessel functions of integer order. Subsequently, we present the simulation results that confirm the established relationship using synthetic data generated by the CST STUDIO SUITE.

The remainder of this paper is organized as follows. In Section \ref{sec:2}, we briefly introduce the DSM for imaging anomalies from $S-$parameter data. Subsequently, in Section \ref{sec:3}, we present simulation results for the synthetic data generated at $f = 1 $GHz of angular frequency, which is followed by a brief conclusion in Section \ref{sec:4}.

\section{Preliminaries}\label{sec:2}
In this section, we briefly survey the three-dimensional forward problem in which an anomaly $\mathrm{D}$ with a smooth boundary $\p\mathrm{D}$ is surrounded by $N-$different dipole antennas. For simplicity, we assume that $\mathrm{D}$ is a small ball with radius $\rho$, which is located at $\mr_\mathrm{D}$ such that
\[\mathrm{D}=\mr_\mathrm{D}+\rho\mathbf{B},\]
where $\mathbf{B}$ denotes a simply connected domain. We denote $\mr_{\mathrm{\tiny TX}}$ as the location of the transmitter, $\mr_{\mathrm{\tiny RX}}^{(n)}$ as the location of the $n-$th receiver, and $\Gamma$ as the set of receivers.
\[\Gamma=\{\mr_{\mathrm{\tiny RX}}^{(n)}:n=1,2,\cdots,N\quad\mbox{with}\quad|\mr_{\mathrm{\tiny RX}}^{(n)}|=R\}.\]
Throughout this paper, for every material and anomaly to be non-magnetic, they are classified on the basis of the value of their relative dielectic permittivity and electrical conductivity at a given angular frequency $\omega=2\pi f$. To reflect this, we set the magnetic permeability to be constant at every location such that $\mu(\mr)\equiv\mu=4\cdot10^{?7}\pi$, and we denote $\eps_\mathrm{B}$ and $\sigma_\mathrm{B}$ as the background relative permittivity and conductivity, respectively. By analogy, $\eps_\mathrm{D}$ and $\sigma_\mathrm{D}$ are respectively those of $\mathrm{D}$. Then, we introduce piecewise constant relative permittivity $\eps(\mr)$ and conductivity $\sigma(\mr)$,
\[\eps(\mr)=\left\{\begin{array}{rcl}
\eps_\mathrm{D} & \mbox{if} & \mr\in\mathrm{D},\\
\eps_\mathrm{B} & \mbox{if} & \mr\in\mathbb{R}^3\backslash\overline{\mathrm{D}},
\end{array}
\right.
\quad\mbox{and}\quad
\sigma(\mr)=\left\{\begin{array}{rcl}
\sigma_\mathrm{D} & \mbox{if} & \mr\in\mathrm{D},\\
\sigma_\mathrm{B} & \mbox{if} & \mr\in\mathbb{R}^3\backslash\overline{\mathrm{D}},
\end{array}
\right.\]
respectively. Using this, we can define the background wavenumber $k$ as
\[k=\omega^2\mu\left(\eps_\mathrm{B}+i\frac{\sigma_\mathrm{B}}{\omega}\right)=\frac{2\pi}{\lambda},\]
where $\lambda$ denotes the wavelength such that $\rho<\lambda/2$.

Let $\mE_{\mathrm{\tiny inc}}(\mr_{\mathrm{\tiny TX}},\mr)$ be the incident electric field in a homogeneous medium because of a point current density at $\mr_{\mathrm{\tiny TX}}$. Then, based on the Maxwell equation, $\mE_{\mathrm{\tiny inc}}(\mr_{\mathrm{\tiny TX}},\mr)$ satisfies
\[\nabla\times\mE_{\mathrm{\tiny inc}}(\mr_{\mathrm{\tiny TX}},\mr)=-i\omega\mu\mH(\mr_{\mathrm{\tiny TX}},\mr)\quad\mbox{and}\quad\nabla\times\mH(\mr_{\mathrm{\tiny TX}},\mr)=(\sigma_\mathrm{B}+i\omega\eps_\mathrm{B})\mE_{\mbox{\tiny inc}}(\mr_{\mathrm{\tiny TX}},\mr)\]
Analogously, let $\mE_{\mathrm{\tiny tot}}(\mr,\mr_{\mathrm{\tiny RX}}^{(n)})$ be the total field in the existence of $\mathrm{D}$ measured at $\mr_{\mathrm{\tiny RX}}^{(n)}$. Then, $\mE_{\mathrm{\tiny tot}}(\mr,\mr_{\mathrm{\tiny RX}}^{(n)})$ satisfies
\[\nabla\times\mE_{\mathrm{\tiny tot}}(\mr,\mr_{\mathrm{\tiny RX}}^{(n)})=-i\omega\mu\mH(\mr,\mr_{\mathrm{\tiny RX}}^{(n)})\quad\mbox{and}\quad\nabla\times\mH(\mr,\mr_{\mathrm{\tiny RX}}^{(n)})=(\sigma(\mr)+i\omega\eps(\mr))\mE_{\mathrm{\tiny tot}}(\mr,\mr_{\mathrm{\tiny RX}}^{(n)})\]
with transmission condition on the boundary $\p\mathrm{D}$ and the open boundary condition:
\[\lim_{|\mr|\to\infty}\mr\bigg(\nabla\times\mE_{\mathrm{\tiny tot}}(\mr,\mr_{\mathrm{\tiny RX}}^{(n)})-ik\frac{\mr}{|\mr|}\times\mE_{\mathrm{\tiny tot}}(\mr,\mr_{\mathrm{\tiny RX}}^{(n)})\bigg)=0.\]

Let $\mathrm{S}(n)$ be the $S-$parameter, which is the ratio of the reflected waves at the $n-$th receiver $\mr_{\mathrm{\tiny RX}}^{(n)}$ to the incident waves at the transmitter $\mr_{\mathrm{\tiny TX}}$. Herein, $\mathrm{S}_{\mathrm{\tiny scat}}(n)$ denotes the scattered field $S-$parameter, which is obtained by subtracting the $S-$parameters from the total and incident fields. Based on \cite{HSM2}, $\mathrm{S}_{\mathrm{\tiny scat}}(n)$ because of the existence of an anomaly, $\mathrm{D}$ can be represented as follows. This representation plays a key role in the DSM that will be designed in the next section.

\begin{equation}\label{Formula-S}
  \mathrm{S}_{\mathrm{\tiny scat}}(n)=\frac{ik^2}{4\omega\mu}\int_\mathrm{D}\chi(\mr)\mE_{\mathrm{\tiny inc}}(\mr_{\mathrm{\tiny TX}},\mr)\mE_{\mathrm{\tiny tot}}(\mr,\mr_{\mathrm{\tiny RX}}^{(n)})d\mr,\quad\chi(\mr)=\frac{\eps(\mr)-\eps_\mathrm{B}}{\eps_\mathrm{B}}+i\frac{\sigma(\mr)-\sigma_\mathrm{B}}{\omega\sigma_\mathrm{B}}.
\end{equation}

\section{Indicator function of direct sampling method: introduction and analysis}\label{sec:3}
In this section, we design an imaging algorithm based on the DSM, which uses the collected $S-$parameters $\mathrm{S}_{\mathrm{\tiny scat}}(n)$ such that $\mathbb{S}=\set{\mathrm{S}_{\mathrm{\tiny scat}}(n):n=1,2,\cdots,N}$. Because we assumed that $\mathrm{D}$ is a small ball such that $\rho<\lambda/2$, using the Born approximation, $\mathrm{S}_{\mathrm{\tiny scat}}(n)$ of (\ref{Formula-S}) can be approximated as follows:
\begin{equation}\label{Formula-S2}
\mathrm{S}_{\mathrm{\tiny scat}}(n)\approx\rho^3\frac{ik^2}{4\omega\mu}\chi(\mr_\mathrm{D})\mE_{\mathrm{\tiny inc}}(\mr_{\mathrm{\tiny TX}},\mr_\mathrm{D})\mE_{\mathrm{\tiny inc}}(\mr_\mathrm{D},\mr_{\mathrm{\tiny RX}}^{(n)}).
\end{equation}
Based on this approximation, the imaging algorithm based on the DSM can be introduced as follows; for a search point $\mr\in\Omega$, the indicator function of DSM is expressed as follows:
\begin{equation}\label{ImagingFunction}
  \mathfrak{F}_{\mathrm{DSM}}(\mr):=\frac{|\langle \mathrm{S}_{\mathrm{\tiny scat}}(n),\mE_{\mathrm{\tiny inc}}(\mr,\mr_{\mathrm{\tiny RX}}^{(n)})\rangle_{L^2(\Gamma)}|}{||\mathrm{S}_{\mathrm{\tiny scat}}(n)||_{L^2(\Gamma)}||\mE_{\mathrm{\tiny inc}}(\mr,\mr_{\mathrm{\tiny RX}}^{(n)})||_{L^2(\Gamma)}},
\end{equation}
where $\Omega$ is a search domain,
\[\langle \mathbf{F}_1(n),\mathbf{F}_2(n)\rangle_{L^2(\Gamma)}:=\sum_{n=1}^{N}\mathbf{F}_1(n)\overline{\mathbf{F}}_2(n),\quad\mbox{and}\quad||\mathbf{F}||_{L^2(\Gamma)}=\left(\langle \mathbf{F},\mathbf{F}\rangle_{L^2(\Gamma)}\right)^{1/2}.\]
Then, $\mathfrak{F}_{\mathrm{DSM}}(\mr)$ has a peak magnitude of $1$ at $\mr=\mr_\mathrm{D}$ and a small magnitude at $\mr\ne\mr_\mathrm{D}$ so that the shape of anomaly $\mathrm{D}$ can be easily identified. Following \cite{IJZ2,LZ}, the structure of $\mathfrak{F}_{\mathrm{DSM}}(\mr)$ can be represented as follows:
\[\mathfrak{F}_{\mathrm{DSM}}(\mr)\approx|J_0(k|\mr-\mr_\mathrm{D}|)|,\]
where $J_m$ is the Bessel function of the first kind of order $m$. However, this does not explain the complete phenomena that were illustrated in the simulation results in the next section; thus, further analysis is required. Through careful analysis, we can identify the structure of the indicator function as follows:

\begin{thm}[Structure of indicator function]\label{StructureDSM}
Assume that the total number of antennas $N$ is small, a sufficiently large wavenumber $k$ and search point $\mr\in\Omega$ satisfy $k|\mr-\ms{n}|\gg0.25$. Let $\vt_n=\ms{n}/|\ms{n}|=(\cos\theta_n,\sin\theta_n)$ and $\mr-\mr_\mathrm{D}=|\mr-\mr_\mathrm{D}|(\cos\phi_\mathrm{D},\sin\phi_\mathrm{D})$. Then, if $\mr$ is far from $\mr_{\mathrm{\tiny RX}}^{(n)}$,
\begin{equation}\label{ImagingFunction}
\mathfrak{F}_{\mathrm{DSM}}(\mr)=\frac{|\Phi(\mr)|}{\displaystyle\max_{\mr\in\Omega}|\Phi(\mr)|},
\end{equation}
where
\begin{equation}\label{Structure}
\Phi(\mr)=J_0(k|\mr-\mr_\mathrm{D}|)+\frac{1}{N}\sum_{n=1}^{N}\sum_{m\in\mathbb{Z}^*\backslash\set{0}}i^m J_m(k|\mr-\mr_\mathrm{D}|)e^{im(\theta_n-\phi_\mathrm{D})}.
\end{equation}
Here, $\mathbb{Z}^*=\mathbb{Z}\cup\set{-\infty,\infty}$ and $J_m$ denotes the Bessel function of integer order $m$ of the first kind.
\end{thm}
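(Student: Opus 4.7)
The plan is to substitute the Born-approximated form (\ref{Formula-S2}) of $\mathrm{S}_{\mathrm{\tiny scat}}(n)$ into the numerator of (\ref{ImagingFunction}), apply the large-argument Hankel asymptotic to collapse each incident field to a plane-wave-type exponential, and then invoke the Jacobi--Anger identity so that the $N$-averaged sum matches the Bessel series $\Phi(\mr)$ of (\ref{Structure}). The denominator is absorbed as a slowly-varying, essentially $\mr$-independent normalization which, after comparison with the numerator at its maximum over $\Omega$, produces the factor $\max_{\mr\in\Omega}|\Phi(\mr)|$.

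First I would insert (\ref{Formula-S2}) into the inner product. The prefactor $\rho^3(ik^2/4\omega\mu)\chi(\mr_\mathrm{D})\mE_{\mathrm{\tiny inc}}(\mr_{\mathrm{\tiny TX}},\mr_\mathrm{D})$ is independent of $n$ and pulls out of the $L^2(\Gamma)$ pairing, reducing the numerator to a constant multiple of $\sum_{n=1}^{N}\mE_{\mathrm{\tiny inc}}(\mr_\mathrm{D},\ms{n})\overline{\mE_{\mathrm{\tiny inc}}(\mr,\ms{n})}$. Because the dipole incident field is a multiple of the Hankel (Green's) function, the hypothesis $k|\mr-\ms{n}|\gg 0.25$ licenses the asymptotic $H_0^{(1)}(t)\sim\sqrt{2/(\pi t)}\,e^{i(t-\pi/4)}$, and similarly for the $\mr_\mathrm{D}$ factor since $\mr_\mathrm{D}$ lies inside $\Omega$ at a definite distance from each $\ms{n}$. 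This gives
\[
\mE_{\mathrm{\tiny inc}}(\mr_\mathrm{D},\ms{n})\overline{\mE_{\mathrm{\tiny inc}}(\mr,\ms{n})}\;\approx\;A_n(\mr,\mr_\mathrm{D})\,e^{ik\vt_n\cdot(\mr-\mr_\mathrm{D})},
\]
with $\vt_n=\ms{n}/|\ms{n}|$ and a slowly-varying amplitude $A_n(\mr,\mr_\mathrm{D})$ that is essentially $n$-insensitive because $|\ms{n}|=R$ is common to all receivers and $\mr$ is far from each $\ms{n}$.

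Next I would apply the Jacobi--Anger expansion
\[
e^{ik\vt_n\cdot(\mr-\mr_\mathrm{D})}=\sum_{m\in\mathbb{Z}}i^{m}J_m(k|\mr-\mr_\mathrm{D}|)\,e^{im(\theta_n-\phi_\mathrm{D})},
\]
using the polar parametrizations $\vt_n=(\cos\theta_n,\sin\theta_n)$ and $\mr-\mr_\mathrm{D}=|\mr-\mr_\mathrm{D}|(\cos\phi_\mathrm{D},\sin\phi_\mathrm{D})$. Averaging over $n=1,\ldots,N$ and isolating the $m=0$ contribution (which is $n$-independent and therefore invariant under the average) yields
\[
\frac{1}{N}\sum_{n=1}^{N}e^{ik\vt_n\cdot(\mr-\mr_\mathrm{D})}=J_0(k|\mr-\mr_\mathrm{D}|)+\frac{1}{N}\sum_{n=1}^{N}\sum_{m\in\mathbb{Z}\backslash\set{0}}i^{m}J_m(k|\mr-\mr_\mathrm{D}|)\,e^{im(\theta_n-\phi_\mathrm{D})},
\]
which is precisely $\Phi(\mr)$ in (\ref{Structure}). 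The denominator of (\ref{ImagingFunction}) is then recognized as an $\mr$-independent positive constant (up to the same amplitude factor $A_n$), so dividing by the maximum of the numerator over $\Omega$ replaces all dimensional constants by $\max_{\mr\in\Omega}|\Phi(\mr)|$ and delivers the claimed formula.

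The main obstacle is the rigorous amplitude bookkeeping after the Hankel asymptotic: one must verify that the residual $n$-dependence of $A_n(\mr,\mr_\mathrm{D})=\mathcal{O}(1/\sqrt{k|\mr-\ms{n}|\,k|\mr_\mathrm{D}-\ms{n}|})$ does not spoil the Jacobi--Anger collapse, and, more delicately, that the $L^2(\Gamma)$ norms in the denominator cancel these factors so that only the dimensionless Bessel series survives. The far-field hypothesis on $\mr$ together with the common receiver radius $R$ is exactly what forces the amplitudes to equalize across $n$; once that cancellation is justified, identification with $\Phi(\mr)$ and the final normalization by $\max_{\mr\in\Omega}|\Phi(\mr)|$ are immediate consequences of the Jacobi--Anger identity.
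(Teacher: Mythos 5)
Your proposal follows essentially the same route as the paper: substitute the Born approximation \eqref{Formula-S2} into the inner product, use the far-field Hankel asymptotic with the common radius $\abs{\ms{n}}=R$ to reduce each term to a plane-wave exponential $e^{ik\vt_n\cdot(\mr-\mr_\mathrm{D})}$, and apply the Jacobi--Anger expansion to obtain $\Phi(\mr)$. The only cosmetic difference is in the final normalization, where the paper simply invokes H{\"o}lder's inequality while you argue that the denominator is essentially $\mr$-independent; your reading is, if anything, the more explicit justification of the same step.
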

\begin{proof}
Because $k|\mr-\ms{n}|\gg0.25$, applying (\ref{Formula-S2}) and the asymptotic form of the Hankel function
\[H_0^{(2)}(k|\mr-\ms{n}|)=\frac{1+i}{4\sqrt{k\pi}}\frac{e^{ik|\ms{n}|}}{\sqrt{|\ms{n}|}}e^{-ik\vt_n\cdot\mr}+o\left(\frac{1}{\sqrt{|\ms{n}|}}\right),\]
we can observe that
\begin{align*}
\langle \mathrm{S}_{\mathrm{\tiny scat}}(n),\mE_{\mathrm{\tiny inc}}(\mr,\mr_{\mathrm{\tiny RX}}^{(n)})\rangle_{L^2(\Gamma)}&\approx\sum_{n=1}^{N}\rho^3\frac{ik^2}{4\omega\mu}\chi(\mr_\mathrm{D})\mE_{\mathrm{\tiny inc}}(\mr_{\mathrm{\tiny TX}},\mr_\mathrm{D})\mE_{\mathrm{\tiny inc}}(\mr_\mathrm{D},\mr_{\mathrm{\tiny RX}}^{(n)})\overline{\mE_{\mathrm{\tiny inc}}(\mr,\mr_{\mathrm{\tiny RX}}^{(n)})}\\
&=\frac{ik\rho^3}{32\omega\mu\pi}\chi(\mr_\mathrm{D})\mE_{\mathrm{\tiny inc}}(\mr_{\mathrm{\tiny TX}},\mr_\mathrm{D})\sum_{n=1}^{N}\frac{1}{|\ms{n}|}e^{ik\vt_n\cdot(\mr-\mr_\mathrm{D})}.
\end{align*}
Because $|\ms{n}|=R$, $\vt_n\cdot(\mr-\mr_\mathrm{D})=|\mr-\mr_\mathrm{D}|(\cos(\theta_n-\phi_\mathrm{D}),\sin(\theta_n-\phi_\mathrm{D}))$, and the following Jacobi-Anger expansion holds uniformly,
\[e^{ix\cos\theta}=J_0(x)+\sum_{m\in\mathbb{Z}^*\backslash\set{0}}i^m J_m(x)e^{im\theta},\]
we can derive
\begin{align*}
\sum_{n=1}^{N}e^{ik\vt_n\cdot(\mr-\mr_\mathrm{D})}&=\sum_{n=1}^{N}\left(J_0(k|\mr-\mr_\mathrm{D}|)+\sum_{m\in\mathbb{Z}^*\backslash\set{0}}i^m J_m(k|\mr-\mr_\mathrm{D}|)e^{im(\theta_n-\phi_\mathrm{D})}\right)\\
&=NJ_0(k|\mr-\mr_\mathrm{D}|)+\sum_{n=1}^{N}\sum_{m\in\mathbb{Z}^*\backslash\set{0}}i^m J_m(k|\mr-\mr_\mathrm{D}|)e^{im(\theta_n-\phi_\mathrm{D})}.
\end{align*}
Thus, we arrive at
\begin{multline*}
\langle \mathrm{S}_{\mathrm{\tiny scat}}(n),\mE_{\mathrm{\tiny inc}}(\mr,\mr_{\mathrm{\tiny RX}}^{(n)})\rangle_{L^2(\Gamma)}\approx\frac{ik\rho^3}{32R\omega\mu\pi}\chi(\mr_\mathrm{D})\mE_{\mathrm{\tiny inc}}(\mr_{\mathrm{\tiny TX}},\mr_\mathrm{D})\sum_{n=1}^{N}e^{ik\vt_n\cdot(\mr-\mr_\mathrm{D})}\\
=\frac{iNk\rho^3}{32R\omega\mu\pi}\chi(\mr_\mathrm{D})\mE_{\mathrm{\tiny inc}}(\mr_{\mathrm{\tiny TX}},\mr_\mathrm{D})\left(J_0(k|\mr-\mr_\mathrm{D}|)+\frac{1}{N}\sum_{n=1}^{N}\sum_{m\in\mathbb{Z}^*\backslash\set{0}}i^m J_m(k|\mr-\mr_\mathrm{D}|)e^{im(\theta_n-\phi_\mathrm{D})}\right).
\end{multline*}
Using this, we apply H{\"o}lder's inequality
\[|\langle \mathrm{S}_{\mathrm{\tiny scat}}(n),\mE_{\mathrm{\tiny inc}}(\mr,\mr_{\mathrm{\tiny RX}}^{(n)})\rangle_{L^2(\Gamma)}|\leq||\mathrm{S}_{\mathrm{\tiny scat}}(n)||_{L^2(\Gamma)}||\mE_{\mathrm{\tiny inc}}(\mr,\mr_{\mathrm{\tiny RX}}^{(n)})||_{L^2(\Gamma)},\]
to obtain (\ref{Structure}). This completes the proof.
\end{proof}

\begin{rem}\label{Remark}Based on the result of Theorem \ref{StructureDSM}, we examine some properties of the DSM.
\begin{enumerate}\renewcommand{\theenumi}{(P\arabic{enumi})}
\item Because $J_0(0) = 1$ and $J_m(0) = 0$ for all $m = 1, 2, \cdots,$ we can observe that $\mathfrak{F}_{\mathrm{DSM}}(\mr)\approx1$ at $\mr = \mr_\mathrm{D}\in\mathrm{D}$. This is the theoretical reason for which the location of $\mathrm{D}$ can be imaged using the DSM.
\item The imaging performance is highly dependent on the value of $k$ and $N$, i.e., to accurately detect the location of $\mathrm{D}$, the value of $N$ must be sufficiently large. This is the theoretical reasoning for increasing the total number of antennas to guarantee good imaging results.
\item If the value of $N$ is not sufficiently large, the right-hand side of (\ref{Structure})
\[\sum_{m\in\mathbb{Z}^*\backslash\set{0}}i^m J_m(k|\mr-\mr_\mathrm{D}|)e^{im(\theta_n-\phi_\mathrm{D})}\]
will deteriorate the imaging performance by generating large numbers of artifacts.
\item If $N$ is sufficiently large, the effect of the deteriorating term becomes negligible and $\mathfrak{F}_{\mathrm{DSM}}(\mr)$ becomes
\[\mathfrak{F}_{\mathrm{DSM}}(\mr)\approx|J_0(k|\mr-\mr_\mathrm{D}|)|.\]
This result is same as the one derived in \cite{LZ}.
\item\label{P5} If the radius of $\mathrm{D}$ is larger than $\lambda$, then it is impossible to apply Born approximation (\ref{Formula-S}). This means that the designed DSM cannot be applied to the imaging of extended targets.
\end{enumerate}
\end{rem}

\begin{rem}[Imaging of multiple anomalies]
If multiple small anomalies $\mathrm{D}_l$, $l=1,2,\cdots,L$, whose radii, permittivities, and conductivities are $\rho_l$, $\eps_l$, and $\sigma_l$, respectively, exist $\mathfrak{F}_{\mathrm{DSM}}(\mr)$ can be represented as
\[\mathfrak{F}_{\mathrm{DSM}}(\mr)=\frac{|\Phi(\mr)|}{\displaystyle\max_{\mr\in\Omega}|\Phi(\mr)|},\]
where
\[\Phi(\mr)=\sum_{l=1}^{L}\rho_l^3\left(\frac{\eps_l-\eps_\mathrm{B}}{\eps_\mathrm{B}}+i\frac{\sigma_l-\sigma_\mathrm{B}}{\omega\sigma_\mathrm{B}}\right)\left(J_0(k|\mr-\mr_\mathrm{D}|)+\frac{1}{N}\sum_{n=1}^{N}\sum_{m\in\mathbb{Z}^*\backslash\set{0}}i^m J_m(k|\mr-\mr_\mathrm{D}|)e^{im(\theta_n-\phi_\mathrm{D})}\right).\]
Based on this structure, we can observe that the imaging performance of $\mathfrak{F}_{\mathrm{DSM}}(\mr)$ is highly dependent on the values of permittivity, conductivity, size of anomalies, and the total number of dipole antennas $N$. This means that if the permittivity, conductivity, or the size of one anomaly is significantly larger than that of the others, the shape of the anomaly can be identified via the map of $\mathfrak{F}_{\mathrm{DSM}}(\mr)$. Otherwise, it will be difficult to identify the shape of the anomaly via the map of $\mathfrak{F}_{\mathrm{DSM}}(\mr)$.
\end{rem}

\section{Simulation results}\label{sec:4}
In this section, simulation results are presented to demonstrate the effectiveness of DSM and to support the mathematical structure derived in Theorem \ref{StructureDSM}. For this purpose, $N = 16$ dipole antennas were used with an applied frequency of $f = 1 $GHz. For the transducer and receivers, we set
\[\mr_{\mathrm{\tiny TX}}=0.09\mbox{m}\left(\cos\frac{3\pi}{2},\sin\frac{3\pi}{2}\right)\quad\mbox{and}\quad\mr_{\mathrm{\tiny RX}}^{(n)}=0.09\mbox{m}\left(\cos\theta_n,\sin\theta_n\right),\quad\theta_n=\frac{3\pi}{2}-\frac{2\pi(n-1)}{N}.\]
Hence, $R=|\mr_{\mathrm{\tiny RX}}^{(n)}|= 0.09 $m. The $S-$parameters $\mathrm{S}_{\mathrm{\tiny scat}}(n)$ for $n = 1,2,\cdots,N$ were generated using the CST STUDIO SUITE. The relative permittivity and conductivity of the background were set to $\eps_\mathrm{B} = 20$ and $\sigma_\mathrm{B} = 0.2 $S/m, respectively, the search domain $\Omega$ was set to be an interior of a circle with radius $0.085 $m centered at the origin, i.e., $\Omega=\set{\mr:|\mr|\leq0.085\mbox{m}}$, and the step size of $\mr$ to be of the order of $0.002 $m.

\begin{ex}[Imaging of a small anomaly]\label{Example1}
In this result, we consider the imaging of small anomalies. For this, we placed an anomaly at $(0.01\mbox{m},0.03\mbox{m})$ with a radius, relative permittivity, and conductivity of $\rho = 0.01 $m, $\eps_\mathrm{D} = 55$, and $\sigma_\mathrm{D} = 1.2 $S/m, respectively. Figure \ref{Small} shows the test configuration with the anomaly and the map of $\mathfrak{F}_{\mathrm{DSM}}(\mr)$ with an identified location of $D$. Based on these results, we detected almost the exact location of the anomaly by considering that $\mr$ satisfies $\mathfrak{F}_{\mathrm{DSM}}(\mr)\approx1$. Furthermore, because of the presence of the infinite series of Bessel functions in (\ref{Structure}), the appearance of artifacts was found to be quite different from the usual form shown in \cite{IJZ1,LZ}.
\end{ex}

\begin{figure}[h]
\begin{center}
\includegraphics[width=0.325\textwidth]{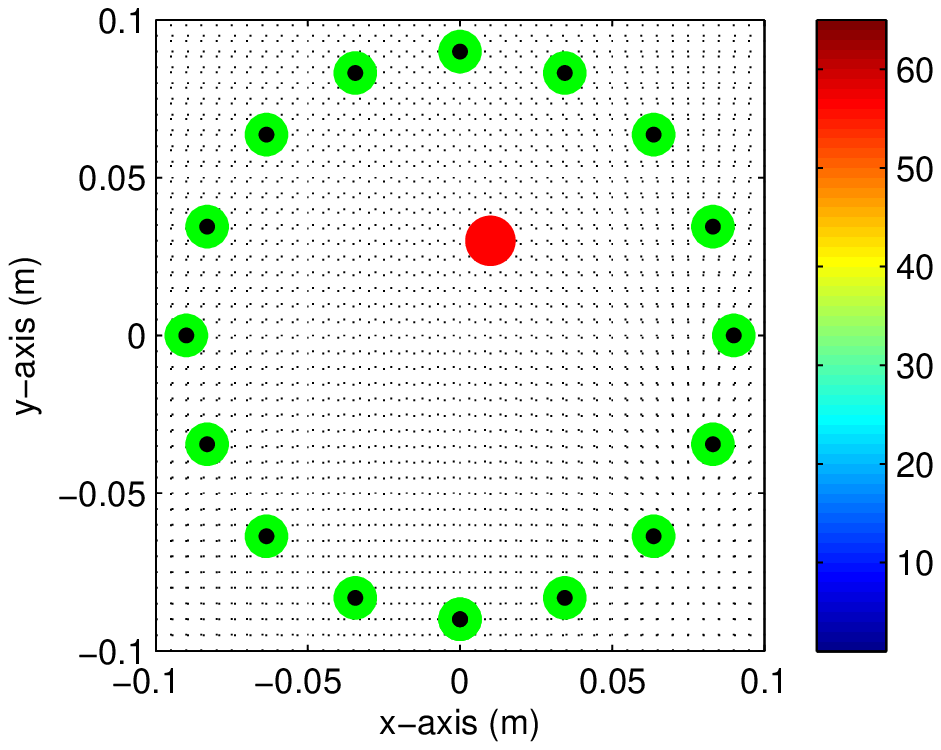}
\includegraphics[width=0.325\textwidth]{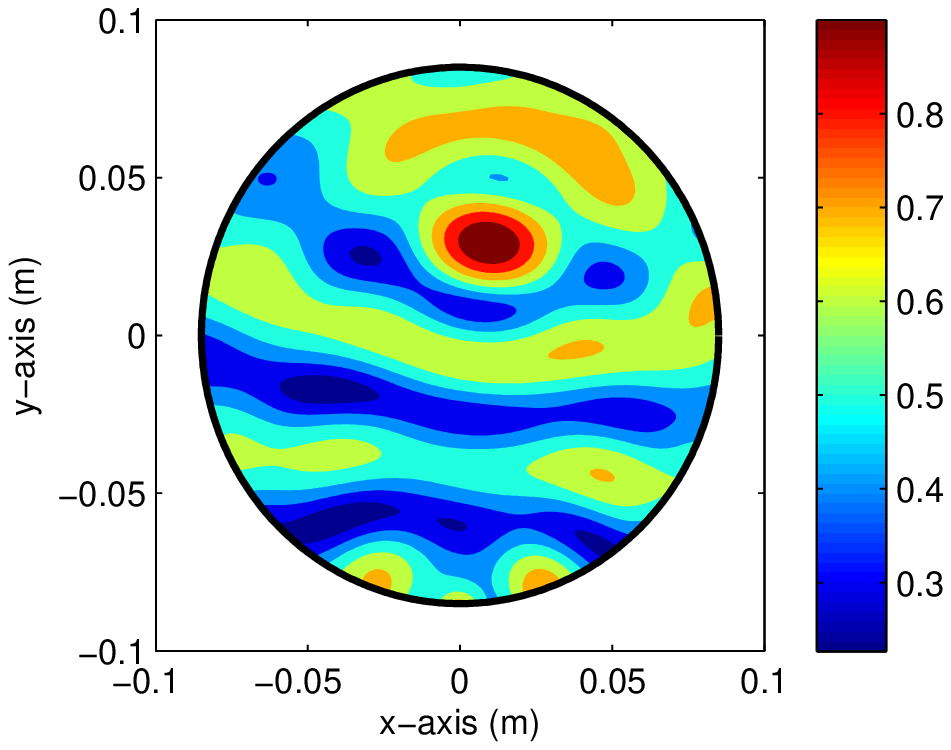}
\includegraphics[width=0.325\textwidth]{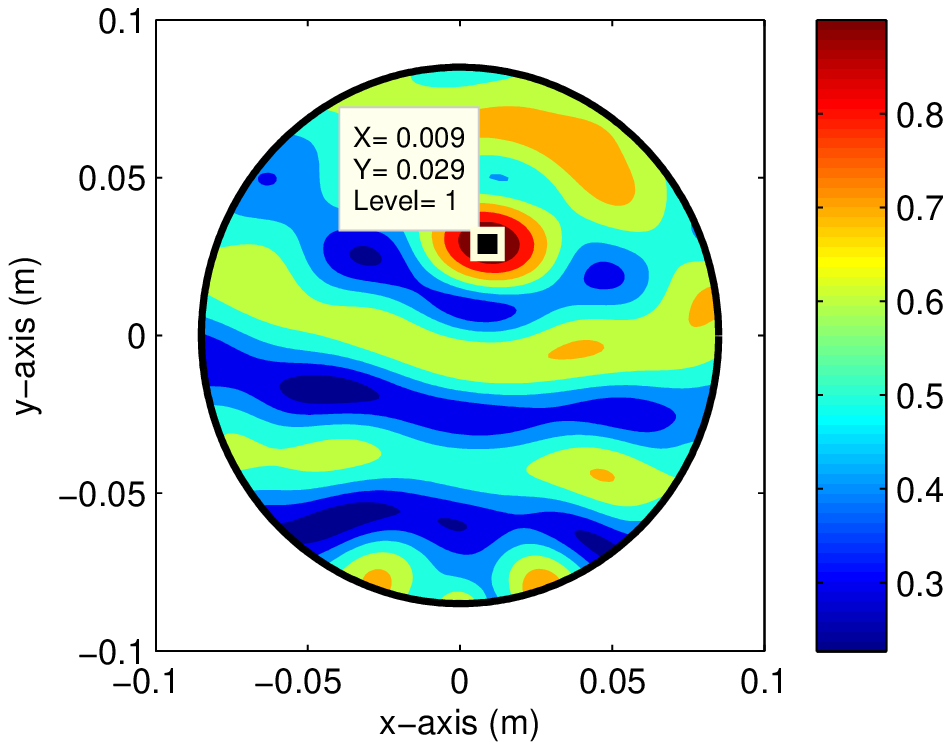}
\caption{\label{Small}Test configuration (left), map of $\mathfrak{F}_{\mathrm{DSM}}(\mr)$ (center) and the identified location of $\mathrm{D}$ (right).}
\end{center}
\end{figure}

\begin{ex}[Imaging of an extended anomaly]\label{Example2}
To examine \ref{P5} of Remark \ref{Remark}, we consider the imaging of extended anomalies. For this, we placed an anomaly at $(0.01\mbox{m},0.02\mbox{m})$ with a radius, relative permittivity, and conductivity of $\rho=0.05 $m, $\eps_\mathrm{D} = 15$, and $\sigma_\mathrm{D}=0.5 $S/m, respectively. Figure \ref{Large} shows the test configuration with the anomaly and a map of $\mathfrak{F}_{\mathrm{DSM}}(\mr)$. Based on these results, compared to the imaging of small anomalies in Example \ref{Example1}, it is impossible to recognize the shape of the anomaly. This result shows the limitation of DSM and that an improvement is necessary.
\end{ex}

\begin{figure}[h]
\begin{center}
\includegraphics[width=0.325\textwidth]{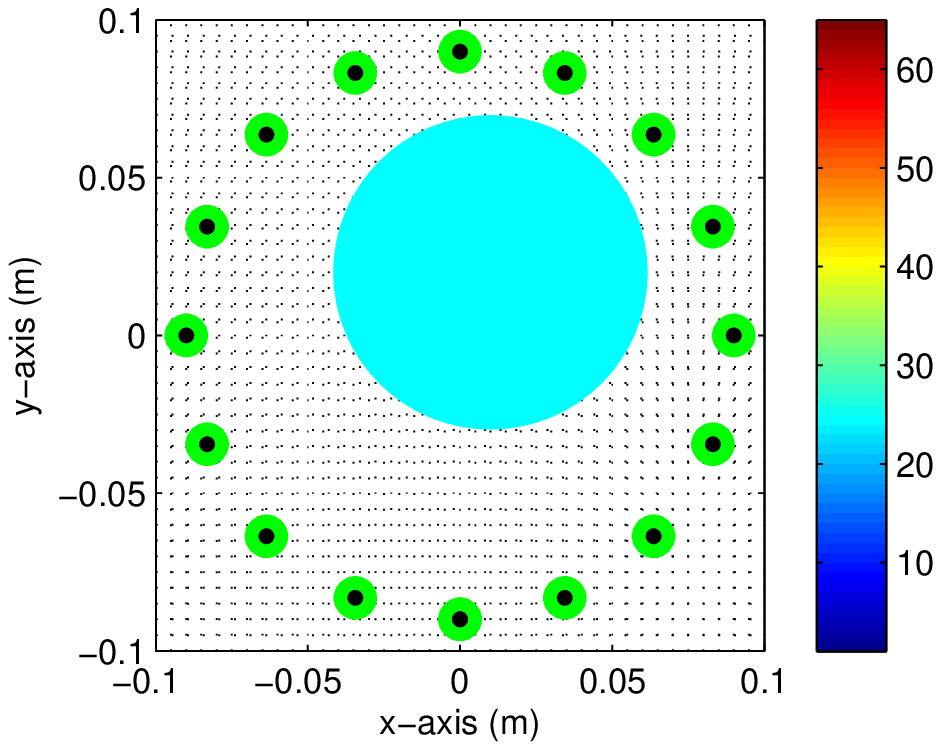}
\includegraphics[width=0.325\textwidth]{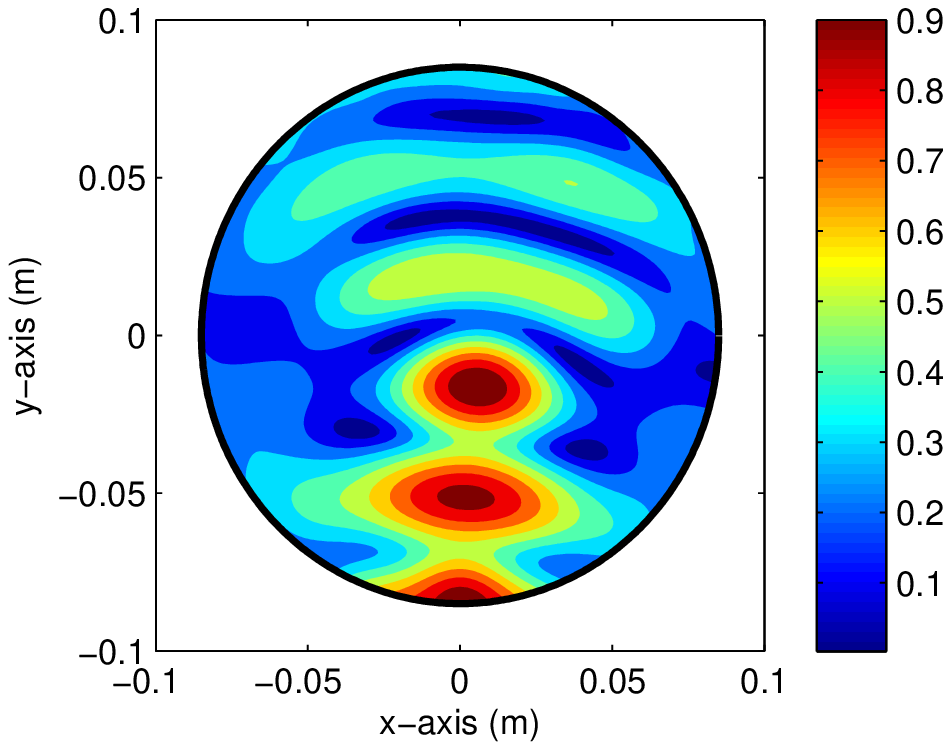}
\includegraphics[width=0.325\textwidth]{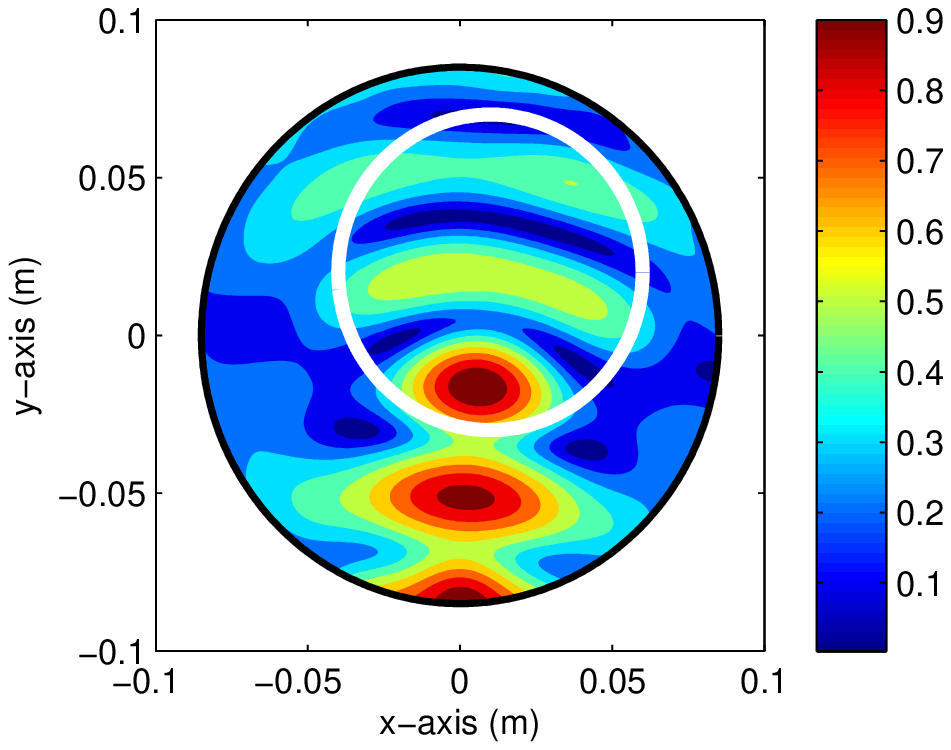}
\caption{\label{Large}Test configuration (left), map of $\mathfrak{F}_{\mathrm{DSM}}(\mr)$ (center) and the $\p\mathrm{D}$ (right).}
\end{center}
\end{figure}

\section{Conclusion}\label{sec:5}
We designed and employed DSM for fast imaging of small anomalies from $S-$parameter values. By considering the relationship between the indicator function and an infinite series of Bessel functions of integer order, certain properties of the DSM were examined. Based on the simulation results with synthetic data, we concluded that DSM is an effective algorithm for detecting small anomalies. Thus, we anticipate its development for its use in real-world applications such as breast cancer detection in biomedical imaging.

\section*{Acknowledgement}
The author is acknowledge to Kwang-Jae Lee and Seong-Ho Son for providing $S-$parameter data from CST STUDIO SUITE. This research was supported by the Basic Science Research Program through the National Research Foundation of Korea (NRF) funded by the Ministry of Education (No. NRF-2017R1D1A1A09000547).

\bibliographystyle{elsarticle-num-names}
\bibliography{../References}
\end{document}